\theoremstyle{plain}
  \newtheorem{Thm}{Theorem}
  \newtheorem{Cor}[Thm]{Corollary}
  \newtheorem{Lem}{Lemma}
\theoremstyle{definition}
\theoremstyle{remark}
  \newtheorem{Rem}{Remark}
\newcommand{\proj}{\operatorname{proj}} % Projection operator
\begin{document}

%\date{\today}

\title[A Hardy inequality for ultraspherical expansions]{A Hardy inequality for ultraspherical expansions with an application to the sphere}

\author[A. Arenas]{Alberto Arenas}
\address{Departamento de Matem\'aticas y Computaci\'on, Universidad de La Rioja, Complejo Cient\'{\i}fico-Tecnol\'ogico, Calle Madre de Dios 53, 26006, Logro\~no, Spain}
\email{alarenas@unirioja.es}

\author[\'O. Ciaurri]{\'Oscar Ciaurri}
\address{Departamento de Matem\'aticas y Computaci\'on, Universidad de La Rioja, Complejo Cient\'{\i}fico-Tecnol\'ogico, Calle Madre de Dios 53, 26006, Logro\~no, Spain}
\email{oscar.ciaurri@unirioja.es}
\thanks{Research of the second author supported by grant MTM2015-65888-C4-4-P of the Spanish government}

\author[E. Labarga]{Edgar Labarga}
\address{Departamento de Matem\'aticas y Computaci\'on, Universidad de La Rioja, Complejo Cient\'{\i}fico-Tecnol\'ogico, Calle Madre de Dios 53, 26006, Logro\~no, Spain}
\email{edlabarg@unirioja.es}

\keywords{Hardy inequalities, uncertainty principles, ultraspherical expansions}
\subjclass[2010]{Primary 42C10}
\begin{abstract}
We prove a Hardy inequality for ultraspherical expansions by using a proper ground state representation. From this result we deduce some uncertainty principles for this kind of expansions. Our result also implies a Hardy inequality on spheres with a potential having a double singularity.
\end{abstract}

\maketitle

\section{Introduction and main result}

For $d\ge 3$, the classical Hardy inequality states that
\begin{equation}
\label{ec:hardy}
\frac{(d-2)^2}{4}\int_{\mathbb{R}^d}\frac{u^2(x)}{|x|^2}\, dx\le
\int_{\mathbb{R}^d}|\nabla u(x)|^2\, dx.
\end{equation}
Due to its applicability, there is an extensive literature about the topic (see the references in \cite{RT}) covering many extensions of this estimate in several and different directions. We are interested in one involving the fractional powers of the Laplacian. We can rewrite \eqref{ec:hardy} as
\begin{equation*}
\frac{(d-2)^2}{4}\int_{\mathbb{R}^d}\frac{u^2(x)}{|x|^2}\, dx\le \int_{\mathbb{R}^d} u(x) (-\Delta u(x)) \, dx
\end{equation*}
and, taking the fractional Laplacian $(-\Delta)^\sigma$ defined by $\widehat{(-\Delta)^\sigma u}=|\cdot|^{2\sigma} \widehat{u}$, a natural extension is the inequality
\begin{equation}
\label{ec:Hardy-frac}
C_{\sigma,d}\int_{\mathbb{R}^d}\frac{u^2(x)}{|x|^{2\sigma}}\, dx\le \int_{\mathbb{R}^d} u(x) (-\Delta)^{\sigma} u(x) \, dx,
\end{equation}
for which the sharp constant $C_{\sigma,d}$ is well known (see \cite{Beckner3,Yafaev}).

From \eqref{ec:Hardy-frac}, we deduce the positivity (in a distributional sense) of the operator
\[
(-\Delta)^\sigma-\frac{C_{\sigma,d}}{|\cdot|^{2\sigma}}.
\]
Our target is to provide a Hardy inequality like \eqref{ec:Hardy-frac} related to ultraspherical expansions and apply it to prove the positivity of certain operator on the sphere with a potential having singularities in both poles of the sphere.

Let $C_n^\lambda(x)$ be the ultraspherical polynomial of degree $n$ and order $\lambda>-1/2$. We consider $c_n^\lambda(x)=d_n^{-1}C_n^\lambda(x)$ with
\[
d_n^2=
\int_{-1}^{1}\left(C_n^\lambda(x)\right)^2\,d\mu_\lambda(x),\qquad
d\mu_\lambda(x)=(1-x^2)^{\lambda-1/2}\, dx.
\]
The sequence of polynomials $\{c_n^\lambda\}_{n\ge 0}$ forms an orthonormal basis of the space $L_\lambda^2:=L^2((-1,1),d\mu_\lambda)$. For each $c_n^\lambda$, it holds that $\mathcal{L}_\lambda c_n^\lambda=-(n+\lambda)^2 c_n^\lambda$, where
\[
\mathcal{L}_\lambda =(1-x^2)\frac{d^2}{dx^2}-(2\lambda+1)x\frac{d}{dx}-\lambda^2.
\]
The ultraspherical expansion of each appropriate function $f$ defined in $(-1,1)$ is given by
\[
f\longmapsto \sum_{n=0}^\infty a_n^\lambda(f)c_n^\lambda,
\]
where $a_n^\lambda(f)$ is the $n$-th Fourier coefficient of $f$ respect to $\{c_n^\lambda\}_{n\ge 0}$, i.e.,
\[
a_n^\lambda(f)=\int_{-1}^{1}f(y)c_n^\lambda(y)\,d\mu_\lambda(y).
\]

The fractional powers of the operator $\mathcal{L}_\lambda$ are defined by
\[
(-\mathcal{L}_\lambda)^{\sigma/2}f=\sum_{n=0}^\infty(n+\lambda)^{\sigma} a_n^\lambda(f)c_n^\lambda, \qquad \sigma >0.
\]
This operator should be the natural candidate to prove a Hardy type inequality for the ultraspherical expansion but, however, it is not the most appropriate in this setting. We have to consider other one with an analogous behaviour to $(-\mathcal{L}_\lambda)^{\sigma/2}$, in order to deduce some results on the sphere. For each $\sigma>0$ we define (spectrally) the operator
\[
A_\sigma^{\lambda} =\frac{\Gamma(\sqrt{-\mathcal{L}_\lambda}+\frac{1+\sigma}{2})}
{\Gamma(\sqrt{-\mathcal{L}_\lambda}+\frac{1-\sigma}{2})}.
\]
Then for $f$ defined on the interval $(-1,1)$
$$
A_\sigma^{\lambda} f(x)=\sum_{n=0}^{\infty}
\frac{\Gamma(n+\lambda+\frac{1+\sigma}{2})}{\Gamma(n+\lambda+\frac{1-\sigma}{2})}a_n^\lambda(f)c_n^\lambda(x).
$$
Note that
\begin{equation}
\label{eq:asym}
\frac{\Gamma(n+\lambda+\frac{1+\sigma}{2})}{\Gamma(n+\lambda+\frac{1-\sigma}{2})}\simeq (n+\lambda)^{\sigma},
\end{equation}
then the behaviour of $(-\mathcal{L}_\lambda)^{\sigma/2}$ and $A_\sigma^\lambda$ is similar. The natural Sobolev space to analyse Hardy type inequalities is
\[
H^{\sigma}_\lambda=\Big\{f\in L^2_\lambda : \|f\|_{H^{\sigma}_\lambda}:=\Big(\sum_{n=0}^\infty (n+\lambda)^{\sigma}(a_n^\lambda(f))^2\Big)^{1/2}<\infty\Big\}.
\]
We have to note that $H^\sigma_\lambda$ is equivalent to the space $\mathcal{L}_{\lambda,\sigma}^2$ introduced in \cite{Betancoretal}.

With the previous notation our Hardy inequality for ultraspherical expansions is given in the following result.
\begin{Thm}\label{theorem01}
Let $\lambda>0$ and $0<\sigma<1$. Then for $u\in H_\lambda^\sigma$
\begin{equation}
\label{ec:Hardy-Ultra}
  Q_{\sigma,\lambda}\int_{-1}^{1}\frac{u^2(x)}{(1-x^2)^{\sigma/2}}\,d\mu_\lambda(x)\leq
  \int_{-1}^{1}u(x)A_\sigma^{\lambda} u(x)\,d\mu_\lambda(x),
\end{equation}
where
\begin{equation}
\label{ec:cons-Hardy}
  Q_{\sigma,\lambda}=2^\sigma
  \frac{\Gamma(\frac{\lambda}{2}+\frac{1+\sigma}{4})^2}{\Gamma(\frac{\lambda}{2}+\frac{1-\sigma}{4})^2}.
\end{equation}
\end{Thm}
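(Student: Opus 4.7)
My plan is to prove Theorem~\ref{theorem01} by the ground state representation method, adapting to the ultraspherical setting the strategy developed by Frank--Lieb--Seiringer for the sharp fractional Hardy inequality in Euclidean space. The three ingredients are: an explicit ground state for the operator $A_\sigma^\lambda$ with potential $Q_{\sigma,\lambda}(1-x^2)^{-\sigma/2}$; an integral representation of the bilinear form $\int u\,A_\sigma^\lambda u\,d\mu_\lambda$ as a nonlocal quadratic form with nonnegative kernel plus a pointwise ``diagonal'' term; and the standard substitution $u=\omega v$ that combines the two.

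The first step is to find a positive function $\omega=\omega_{\lambda,\sigma}$ on $(-1,1)$ satisfying
\[
A_\sigma^\lambda \omega(x)=\frac{Q_{\sigma,\lambda}}{(1-x^2)^{\sigma/2}}\,\omega(x).
\]
The shape of the constant \eqref{ec:cons-Hardy}---a squared ratio of Gamma functions in the parameters $\tfrac{\lambda}{2}+\tfrac{1\pm\sigma}{4}$ together with a factor $2^\sigma$---is the calling card of a Beta-type integral of a power of $1-x^2$, so the natural candidate is $\omega(x)=(1-x^2)^\alpha$ for an exponent $\alpha$ to be determined. Expanding $(1-x^2)^\alpha$ in Gegenbauer polynomials (for which a classical closed form is available) and applying $A_\sigma^\lambda$ term-by-term yields a series built from the eigenvalues $\gamma_n=\Gamma(n+\lambda+\tfrac{1+\sigma}{2})/\Gamma(n+\lambda+\tfrac{1-\sigma}{2})$ and the Fourier--Gegenbauer coefficients of $(1-x^2)^\alpha$; a well-poised hypergeometric summation (of Gauss or Dougall type) combined with the Legendre duplication formula should collapse the series, for the correct choice of $\alpha$, to a multiple of $(1-x^2)^{\alpha-\sigma/2}$, with constant precisely \eqref{ec:cons-Hardy}.

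The second step is to write
\[
\int_{-1}^{1} u\,A_\sigma^\lambda u\,d\mu_\lambda=\int_{-1}^{1}V u^2\,d\mu_\lambda+\frac{1}{2}\iint K_\sigma^\lambda(x,y)(u(x)-u(y))^2\,d\mu_\lambda(x)\,d\mu_\lambda(y),
\]
with an explicit nonnegative kernel $K_\sigma^\lambda$ and an auxiliary potential $V$, first for polynomial $u$ and then extended to $H_\lambda^\sigma$ by density. Positivity of $K_\sigma^\lambda$ is the key point: for $0<\sigma<1$ the symbol $\gamma_n$ is of Bernstein type, so $A_\sigma^\lambda$ can be realized as a Bochner-subordinated integral of the contractive semigroup generated by $\sqrt{-\mathcal{L}_\lambda}$, which admits a positive Poisson-type kernel for ultraspherical expansions; integrating that kernel against the subordination measure yields the desired $K_\sigma^\lambda\ge 0$.

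With these two ingredients, one substitutes $u=\omega v$ in the representation of Step~2 and uses the eigenfunction identity of Step~1 to cancel the potential $V u^2$ against $Q_{\sigma,\lambda}(1-x^2)^{-\sigma/2} u^2$, obtaining
\[
\int_{-1}^{1}u\,A_\sigma^\lambda u\,d\mu_\lambda-Q_{\sigma,\lambda}\int_{-1}^{1}\frac{u^2}{(1-x^2)^{\sigma/2}}\,d\mu_\lambda=\frac{1}{2}\iint K_\sigma^\lambda(x,y)\omega(x)\omega(y)(v(x)-v(y))^2\,d\mu_\lambda(x)\,d\mu_\lambda(y),
\]
which is manifestly nonnegative and proves \eqref{ec:Hardy-Ultra}. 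The main obstacle is Step~1: the explicit summation producing the sharp constant $Q_{\sigma,\lambda}$ in \eqref{ec:cons-Hardy}---with the $2^\sigma$ factor and the squared Gamma quotient emerging from the duplication formula---is delicate and requires careful bookkeeping of the normalizations in the Gegenbauer expansion of $(1-x^2)^\alpha$.
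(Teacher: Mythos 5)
Your plan is essentially the paper's own proof: your Step~2 is the paper's Lemma~\ref{lemma01} (the subordination formula for the ultraspherical Poisson semigroup produces the positive symmetric kernel $K_\sigma^\lambda$ plus a diagonal term, which is in fact the constant $E_{\sigma,\lambda}$ rather than a potential $V$), your Step~1 is its Lemma~\ref{lemma02} (the ground state is $(1-x^2)^{-\lambda/2-(1-\sigma)/4}$, and the coefficient identity is obtained from a ${}_3F_2$ evaluation by Watson's theorem together with the reflection and duplication formulas), and the substitution $u=\omega v$ then gives exactly the ground state representation \eqref{ec:ground}, so positivity of $K_\sigma^\lambda$ concludes. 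The only imprecision is the cancellation bookkeeping: in the polarized identity the constant term $E_{\sigma,\lambda}$ cancels with itself, while the Hardy term arises from $\langle A_\sigma^\lambda\omega,\omega v^2\rangle_\lambda$ via the eigen-identity, which is precisely what your final displayed formula encodes.
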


Inequality \eqref{ec:Hardy-Ultra} can be rewritten in terms of the Fourier coefficients
\begin{equation}
\label{ec:Pitt}
 Q_{\sigma,\lambda}\int_{-1}^{1}\frac{u^2(x)}{(1-x^2)^{\sigma/2}}\,d\mu_\lambda(x)\leq
 \sum_{n=0}^\infty \frac{\Gamma(n+\lambda+\frac{1+\sigma}{2})}{\Gamma(n+\lambda+\frac{1-\sigma}{2})}(a_n^\lambda(u))^2,
\end{equation}
which is a kind of Pitt inequality for the ultraspherical expansions (for other Pitt inequalities see \cite{Beckner2,GIT}). Note that for the right hand side of \eqref{ec:Hardy-Ultra} we have, by \eqref{eq:asym},
\[
\int_{-1}^{1}u(x)A_\sigma^{\lambda} u(x)\,d\mu_\lambda(x)=\sum_{n=0}^\infty \frac{\Gamma(n+\lambda+\frac{1+\sigma}{2})}{\Gamma(n+\lambda+\frac{1-\sigma}{2})}(a_n^\lambda(u))^2
\simeq \|u\|_{H^{\sigma}_\lambda}^2,
\]
so the space $H_\lambda^\sigma$ is the adequated one.

The proof of Theorem \ref{theorem01} will be a consequence of a proper ground state representation in our setting, analogous to the given one in the Euclidean case in \cite{FLS}. Following the ideas in that paper, we can see that the constant $Q_{\sigma,\lambda}$ is sharp but not achieved. Similar ideas have been recently exploited in \cite{CRT,RT}.

From \eqref{ec:Hardy-Ultra}, by using Cauchy–-Schwarz inequality, we can obtain a Heisenberg type uncertainty principle as it was done for the sublaplacian of the Heisenberg group in \cite{GL}, and for the fractional powers of the same sublaplacian in~\cite{RT}.

\begin{Cor}
Let $\lambda>0$ and $0<\sigma<1$. Then for $u\in H_\lambda^\sigma$
\begin{equation*}
  Q_{\sigma,\lambda}\left(\int_{-1}^{1} u^2(x)\,d\mu_\lambda(x)\right)^{2}\leq
  \int_{-1}^{1}u^2(x)(1-x^2)^{\sigma/2}\,d\mu_\lambda(x)
  \int_{-1}^{1}u(x)A_\sigma^{\lambda} u(x)\,d\mu_\lambda(x),
\end{equation*}
where $Q_{\sigma,\lambda}$ is the constant given in \eqref{ec:cons-Hardy}.
\end{Cor}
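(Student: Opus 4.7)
The plan is to reduce the Corollary to Theorem~\ref{theorem01} by a single application of the Cauchy--Schwarz inequality in $L^2_\lambda$, just as the hint in the paper suggests. The key algebraic observation is that the weight $(1-x^2)^{\sigma/2}$ appearing in the bound can be split symmetrically against its reciprocal so that the resulting two factors recover, on one hand, the left-hand side of the Hardy inequality \eqref{ec:Hardy-Ultra}, and on the other hand the weighted $L^2$-quantity that appears in the Corollary.

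More precisely, I would write
\[
u^2(x)=\frac{u(x)}{(1-x^2)^{\sigma/4}}\cdot u(x)(1-x^2)^{\sigma/4}
\]
and apply the Cauchy--Schwarz inequality in $L^2((-1,1),d\mu_\lambda)$ to obtain
\[
\int_{-1}^{1} u^2(x)\,d\mu_\lambda(x)\le\left(\int_{-1}^{1}\frac{u^2(x)}{(1-x^2)^{\sigma/2}}\,d\mu_\lambda(x)\right)^{1/2}\left(\int_{-1}^{1} u^2(x)(1-x^2)^{\sigma/2}\,d\mu_\lambda(x)\right)^{1/2}.
\]
Squaring both sides and then multiplying by the constant $Q_{\sigma,\lambda}$ yields
\[
Q_{\sigma,\lambda}\left(\int_{-1}^{1} u^2(x)\,d\mu_\lambda(x)\right)^{2}\le\left(Q_{\sigma,\lambda}\int_{-1}^{1}\frac{u^2(x)}{(1-x^2)^{\sigma/2}}\,d\mu_\lambda(x)\right)\int_{-1}^{1} u^2(x)(1-x^2)^{\sigma/2}\,d\mu_\lambda(x).
\]

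Finally, the factor in parentheses is precisely the left-hand side of \eqref{ec:Hardy-Ultra}, so invoking Theorem~\ref{theorem01} to bound it by $\int_{-1}^{1} u(x)A_\sigma^{\lambda}u(x)\,d\mu_\lambda(x)$ gives the desired inequality. There is essentially no obstacle in this argument: the only point that needs a small justification is that the two factors on the right are finite for $u\in H^\sigma_\lambda$, which is automatic since $(1-x^2)^{\sigma/2}\le 1$ gives $\int u^2 (1-x^2)^{\sigma/2}\,d\mu_\lambda\le\|u\|_{L^2_\lambda}^2\lesssim\|u\|_{H^\sigma_\lambda}^2$, and the other factor is finite by Theorem~\ref{theorem01} itself applied to $u\in H^\sigma_\lambda$.
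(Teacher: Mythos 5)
Your proposal is correct and is exactly the argument the paper intends: the paper derives the Corollary from \eqref{ec:Hardy-Ultra} precisely by the Cauchy--Schwarz splitting $u^2=\bigl(u(1-x^2)^{-\sigma/4}\bigr)\bigl(u(1-x^2)^{\sigma/4}\bigr)$ in $L^2_\lambda$, followed by an application of Theorem \ref{theorem01}. The finiteness remarks you add are fine and do not change the argument.
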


Pitt inequality \eqref{ec:Pitt} allows us to prove a logarithmic uncertainty principle for the ultraspherical expansions. The main idea comes from \cite{Beckner3}. By an elementary argument, for a derivable function such that $\phi(0)=0$ and $\phi(\sigma)>0$ for $\sigma\in(0,\varepsilon)$, with $\varepsilon >0$, it is verified that $\phi'(0_+)\ge 0$. Then, taking the function
\[
\phi(\sigma)=
 \sum_{n=0}^\infty \frac{\Gamma(n+\lambda+\frac{1+\sigma}{2})}{\Gamma(n+\lambda+\frac{1-\sigma}{2})}(a_n^\lambda(u))^2
 -Q_{\sigma,\lambda}\int_{-1}^{1}\frac{u^2(x)}{(1-x^2)^{\sigma/2}}\,d\mu_\lambda(x),
\]
we have $\phi(0)=0$ (this is Parseval identity) and, by \eqref{ec:Pitt}, $\phi(\sigma)>0$ for $\sigma \in (0,1)$, then $\phi'(0_+)\ge 0$ and this inequality gives the logarithmic uncertainty principle, which is written as
\begin{multline*}
\left(\log 2+\psi\left(\frac{\lambda}{2}+\frac14\right)\right)\int_{-1}^1 u^2(x)\, d\mu_\lambda(x)\\\le
\sum_{n=0}^\infty \psi\left(n+\lambda+\frac12\right)(a_n(u))^2+\int_{-1}^1\log(\sqrt{1-x^2})u^2(x)\, d\mu_\lambda(x),
\end{multline*}
where $\psi(a)=\frac{\Gamma'(a)}{\Gamma(a)}$.

In next section we will show an application of Theorem \ref{theorem01} to obtain a Hardy inequality on the sphere. The results in Section \ref{sec:aux} are the main ingredients in the proof of Theorem \ref{theorem01} which is given in last section of the paper.

%%%%%%%%%%%%%%%%%%%%%%%%%%%%%%%%%%%%%%%%%%%%%%%%%%
\section{An application to the sphere}
%%%%%%%%%%%%%%%%%%%%%%%%%%%%%%%%%%%%%%%%%%%%%%%%%%

It is well known that $L^2(\mathbb{S}^d)=\oplus_{n=0}^\infty \mathcal{H}_n(\mathbb{S}^d)$, where $\mathcal{H}_n(\mathbb{S}^d)$ is  the set of spherical harmonics of degree $n$ in $d+1$ variables. If we consider the shifted Laplacian on the sphere
\[
-\Delta_{\mathbb{S}^d}=\tilde{-\Delta_{\mathbb{S}^d}}+\left(\frac{d-1}{2}\right)^2,
\]
where $\tilde{-\Delta_{\mathbb{S}^d}}$ is the Laplace-Beltrami operator on $\mathbb{S}^d$, it is verified that
\[
-\Delta_{\mathbb{S}^d} \mathcal{H}_n(\mathbb{S}^d)=\left(n+\frac{d-1}{2}\right)^2\mathcal{H}_n(\mathbb{S}^d).
\]
In this way, the analogous of the operator $A_\sigma^{\lambda}$ on $\mathbb{S}^d$ is defined by
\begin{align*}
\mathbf{A}_\sigma f&=\frac{\Gamma\left(\sqrt{-\Delta_{\mathbb{S}^d}}+\frac{1+\sigma}{2}\right)}
{\Gamma\left(\sqrt{-\Delta_{\mathbb{S}^d}}+\frac{1-\sigma}{2}\right)}f\\
&=\sum_{n=0}^\infty\frac{\Gamma\left(n+\frac{d-1}{2}+\frac{1+\sigma}{2}\right)}
{\Gamma\left(n+\frac{d-1}{2}+\frac{1-\sigma}{2}\right)}\proj_{\mathcal{H}_n(\mathbb{S}^d)}f,
\end{align*}
where $\proj_{\mathcal{H}_n(\mathbb{S}^d)}f$ denotes the projection of $f$ onto the eigenspace $\mathcal{H}_n(\mathbb{S}^d)$.

The operator $\mathbf{A}_\sigma$ becomes the fractional powers of the Laplacian in the Euclidean space through conformal transforms as was observed by T. P. Branson in~\cite{Branson}. So $\mathbf{A}_\sigma$ is the natural operator to prove a Hardy type inequality on the sphere. In our proof, we will write $\mathbf{A}_\sigma$ in terms of $A_\sigma^\lambda$ and this is the main reason to consider $A_\sigma^\lambda$ in the case of the ultraspherical expansions. An analogous of the Hardy-Littlewood-Sobolev inequality for $\mathbf{A}_\sigma$ and some other inequalities for it were given by W. Beckner in \cite{Beckner-Annals}. The operators $\mathbf{A}_\sigma$ also appear in \cite[p. 151]{Samko} and \cite[p. 525]{Rubin}.

Each point $x\in \mathbb{S}^d$ can be written as
\[
x=(t,\sqrt{1-t^2}x'_1,\dots,\sqrt{1-t^2}x'_d),
\]
for $t\in (-1,1)$ and $x':=(x'_1,\dots,x'_d)\in \mathbb{S}^{d-1}$, and so
\[
\int_{\mathbb{S}^d} f(x)\, dx=\int_{-1}^1 \int_{\mathbb{S}^{d-1}}f(t,\sqrt{1-t^2}x')(1-t^2)^{(d-2)/2}\, dx'\, dt.
\]
With these coordinates, see \cite[Section 3]{Sherman}, we have that an orthonormal basis for each $\mathcal{H}_n(\mathbb{S}^d)$ is given by
\[
\phi_{n,j,k}(x)=\psi_{n,j}(t)Y_{j,k}^d (x'), \qquad j=0,\dots,n,
\]
with
\[
\psi_{n,j}(t)=(1-t^2)^{j/2}c_{n-j}^{j+(d-1)/2}(t)
\]
and $\{Y_{j,k}^d\}_{k=1,\dots,d(j)}$ an orthonormal basis of spherical harmonics on $\mathbb{S}^{d-1}$ of degree $j$. The value $d(j)$ indicates the dimension of $\mathcal{H}_j(\mathbb{S}^{d-1})$; i.e.,
\[
d(j)=(2j+d-2)\frac{(j+d-3)!}{j!(d-2)!}.
\]
Then, the orthogonal projection of $f$ onto the eigenspace $\mathcal{H}_n(\mathbb{S}^d)$ can be written as
\[
\proj_{\mathcal{H}_n(\mathbb{S}^d)}f=\sum_{j=0}^n \sum_{k=1}^{d(j)} f_{n,j,k} \phi_{n,j,k},
\]
with
\[
f_{n,j,k}=\int_{-1}^{1} G_{j,k}(t) c_{n-j}^{j+(d-1)/2}(t) (1-t^2)^{j+(d-2)/2}\, dt,
\]
\[
G_{j,k}(t)=(1-t^2)^{-j/2}F_{j,k}(t)\quad\text{ and }\quad
F_{j,k}(t)=\int_{\mathbb{S}^{d-1}}f(t,\sqrt{1-t^2}x')Y_{j,k}^d(x')\, dx'.
\]
It is easy to observe that
\[
f(x)=\sum_{j=0}^\infty \sum_{k=1}^{d(j)} F_{j,k}(t)Y_{j,k}^d(x')= \sum_{j=0}^\infty \sum_{k=1}^{d(j)} (1-t^2)^{j/2}G_{j,k}(t)Y_{j,k}^d(x').
\]
Moreover, from the definition of $\mathbf{A}_\sigma$, we have
\[
\mathbf{A}_\sigma f(x)=\sum_{j=0}^\infty \sum_{k=1}^{d(j)}(1-t^2)^{j/2} A_{\sigma}^{j+(d-1)/2} G_{j,k}(t)Y_{j,k}^d(x').
\]

Now, considering the Sobolev space
\[
\mathbf{H}^{\sigma}=\Big\{f\in L^2(\mathbb{S}^d) : \|f\|_{\mathbf{H}^{\sigma}}:=\Big(\sum_{n=0}^\infty \Big(n+\frac{d-1}{2}\Big)^{\sigma}
\|\proj_{\mathcal{H}_n(\mathbb{S}^d)}f\|^2_{L^2(\mathbb{S}^d)}\Big)^{1/2}<\infty\Big\},
\]
we have the following Hardy inequality on the sphere.
\begin{Thm}\label{theorem02}
Let $d\ge 2$, $0<\sigma<1$, and $e_d$ be the north pole of the sphere $\mathbb{S}^d$. Then for $f\in \mathbf{H}^\sigma$
\begin{equation}
\label{ec:Hardy-sphere}
  2^\sigma Q_{\sigma,(d-1)/2}\int_{\mathbb{S}^d}\frac{f^2(x)}{(|x-e_d||x+e_d|)^{\sigma}}\,dx\leq
  \int_{\mathbb{S}^d}f(x)\mathbf{A}_\sigma f(x)\,dx,
\end{equation}
where $Q_{\sigma,(d-1)/2}$ is the constant given in \eqref{ec:cons-Hardy}.
\end{Thm}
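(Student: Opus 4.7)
The plan is to reduce the inequality on $\mathbb{S}^d$ to the one-dimensional inequality of Theorem \ref{theorem01} applied slice by slice, using the decomposition $f(x)=\sum_{j,k}(1-t^2)^{j/2}G_{j,k}(t)Y_{j,k}^d(x')$ and the corresponding formula for $\mathbf{A}_\sigma f$ recalled in this section.

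First I would simplify the weight. Since $|e_d|=1$ and $x\cdot e_d = t$, one has $|x\pm e_d|^2 = 2(1\pm t)$, so $(|x-e_d||x+e_d|)^\sigma = 2^\sigma(1-t^2)^{\sigma/2}$. The factor $2^\sigma$ cancels the $2^\sigma$ in front of $Q_{\sigma,(d-1)/2}$, and it suffices to prove
\[
Q_{\sigma,(d-1)/2}\int_{\mathbb{S}^d}\frac{f^2(x)}{(1-t^2)^{\sigma/2}}\,dx \le \int_{\mathbb{S}^d}f(x)\mathbf{A}_\sigma f(x)\,dx.
\]
Next, using orthonormality of $\{Y_{j,k}^d\}$ on $\mathbb{S}^{d-1}$ together with the identity $(1-t^2)^{j}(1-t^2)^{(d-2)/2}\,dt = d\mu_{j+(d-1)/2}(t)$, both integrals decouple into slices:
\[
\int_{\mathbb{S}^d}\frac{f^2(x)}{(1-t^2)^{\sigma/2}}\,dx=\sum_{j,k}\int_{-1}^{1}\frac{G_{j,k}(t)^2}{(1-t^2)^{\sigma/2}}\,d\mu_{j+(d-1)/2}(t),
\]
and similarly the right-hand side becomes $\sum_{j,k}\int_{-1}^{1}G_{j,k}(t)A_\sigma^{j+(d-1)/2}G_{j,k}(t)\,d\mu_{j+(d-1)/2}(t)$.

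Applying Theorem \ref{theorem01} with $\lambda=j+(d-1)/2>0$ to each $G_{j,k}$ yields the same slice inequality with constant $Q_{\sigma,j+(d-1)/2}$ in place of $Q_{\sigma,(d-1)/2}$. The remaining step is a monotonicity observation: for fixed $\sigma\in(0,1)$, the map $\lambda\mapsto Q_{\sigma,\lambda}$ is non-decreasing, because
\[
\frac{d}{d\lambda}\log Q_{\sigma,\lambda}=\psi\!\left(\tfrac{\lambda}{2}+\tfrac{1+\sigma}{4}\right)-\psi\!\left(\tfrac{\lambda}{2}+\tfrac{1-\sigma}{4}\right)\ge 0
\]
by monotonicity of the digamma function. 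Hence $Q_{\sigma,(d-1)/2}\le Q_{\sigma,j+(d-1)/2}$ for every $j\ge 0$, and summing the slice inequalities in $(j,k)$ concludes the proof.

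The only subtlety I expect is bookkeeping: one has to check that the decomposition of $f\in\mathbf{H}^\sigma$ produces slices $G_{j,k}$ that live in $H^\sigma_{j+(d-1)/2}$, so that Theorem \ref{theorem01} is indeed applicable. This follows by matching Fourier coefficients, since the expansion of $G_{j,k}$ in $\{c_{n-j}^{j+(d-1)/2}\}_{n\ge j}$ has coefficients equal to the numbers $f_{n,j,k}$, and $\|f\|_{\mathbf{H}^\sigma}^2$ controls $\sum_{n\ge j}(n+(d-1)/2)^{\sigma}f_{n,j,k}^2$ uniformly in $(j,k)$.
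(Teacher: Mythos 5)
Your proof is correct and follows essentially the same route as the paper: decompose $f$ into the slices $G_{j,k}$ via spherical harmonics on $\mathbb{S}^{d-1}$, apply Theorem \ref{theorem01} with $\lambda=j+(d-1)/2$ on each slice, use the monotonicity of $Q_{\sigma,\lambda}$ in $\lambda$ (the paper cites the gamma-ratio inequality from Yafaev, you verify it directly with the digamma function), and identify $(|x-e_d||x+e_d|)^\sigma=2^\sigma(1-t^2)^{\sigma/2}$. The only cosmetic difference is that the paper rewrites the left-hand side in terms of $F_{j,k}$ against $d\mu_{(d-1)/2}$ rather than $G_{j,k}$ against $d\mu_{j+(d-1)/2}$, which is the same quantity.
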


\begin{proof}
By the orthogonality of the spherical harmonics, it is elementary to show that
\[
\int_{\mathbb{S}^d}f(x)\mathbf{A}_\sigma f(x)\,dx=\sum_{j=0}^\infty \sum_{k=1}^{d(j)}\int_{-1}^1 G_{j,k}(t) A_{\sigma}^{j+(d-1)/2} G_{j,k}(t)\, d\mu_{j+(d-1)/2}(t).
\]
Now, applying Theorem \ref{theorem01}, we deduce that
\[
\int_{\mathbb{S}^d}f(x)\mathbf{A}_\sigma f(x)\,dx\ge
\sum_{j=0}^\infty \sum_{k=1}^{d(j)}Q_{\sigma, j+(d-1)/2} \int_{-1}^1 \frac{F_{j,k}^2(t)}{(1-t^2)^{\sigma/2}} \, d\mu_{(d-1)/2}(t).
\]
It is known (see \cite{Yafaev}) that for $0<x\le y$ and $j\ge 0$ we have that $\frac{\Gamma(j+y)}{\Gamma(j+x)}\ge \frac{\Gamma(y)}{\Gamma(x)}$. So, $Q_{\sigma,j+(d-1)/2}\ge Q_{\sigma,(d-1)/2}$ and
\[
\int_{\mathbb{S}^d}f(x)\mathbf{A}_\sigma f(x)\,dx\ge Q_{\sigma,(d-1)/2}
\sum_{j=0}^\infty \sum_{k=1}^{d(j)}\int_{-1}^1 \frac{F_{j,k}^2(t)}{(1-t^2)^{\sigma/2}} \, d\mu_{(d-1)/2}(t).
\]
The proof of \eqref{ec:Hardy-sphere} is finished by using the identity
\[
\sum_{j=0}^\infty \sum_{k=1}^{d(j)}\int_{-1}^1 \frac{F_{j,k}^2(t)}{(1-t^2)^{\sigma/2}} \, d\mu_{(d-1)/2}(t)=
2^\sigma \int_{\mathbb{S}^d}\frac{f^2(x)}{(|x-e_d||x+e_d|)^{\sigma}}\, dx.
\]
\end{proof}

The analogous role on the sphere of radially symmetric functions is played by functions which are invariant under the action of $SO(d-1)$. By $SO(d-1)$-invariance we mean that $f$ is invariant under the action of the group $SO(d-1)$ on $\mathbb{S}^{d-1}$ whenever $SO(d-1)$ is embedded into $SO(d)$ in a suitable way. Each function $f$ of this kind can be written as $f(x)=g(\langle x,e_d\rangle )$, for a certain function $g$ defined in $(-1,1)$. Then for this kind of functions Theorem \ref{theorem02} reduces to Theorem \ref{theorem01} with $\lambda=(d-1)/2$, in this way we can deduce that the constant $2^\sigma Q_{\sigma,(d-1)/2}$ in \eqref{ec:Hardy-sphere} is sharp.

As in the classic case, from Theorem \ref{theorem02} we deduce that in a distributional sense
\[
\mathbf{A}_\sigma-\frac{2^\sigma Q_{\sigma,(d-1)/2}}{(|x-e_d||x+e_d|)^{\sigma}}\ge 0.
\]
Note that in this case we are perturbing the operator $\mathbf{A}_\sigma$ adding a potential with singularities in both poles of the sphere.

%%%%%%%%%%%%%%%%%%%%%%%%%%%%%%%%%%%%%%%%%%%%%%%%%%
\section{Auxiliary results}
\label{sec:aux}
%%%%%%%%%%%%%%%%%%%%%%%%%%%%%%%%%%%%%%%%%%%%%%%%%%

The following lemmas give the tools to prove Theorem \ref{theorem01}. To be more precise, Lemma \ref{lemma01} provides a nonlocal representation of the operator $A_\sigma^{\lambda}$ with a kernel having nice properties for our target. Lemma \ref{lemma02} shows the action of the operator $A_\sigma^{\lambda}$ on the family of weights $(1-x^2)^{-(\lambda/2+(1-\sigma)/4)}$.

For $f,g\in L_\lambda^2$ we are going to set up the  notation
$$
\langle f,g\rangle_\lambda=\int_{-1}^{1}f(x)g(x)\,d\mu_\lambda(x)
$$
to simplify the writing.

\begin{Lem}\label{lemma01}
Let $\lambda>0$ and $0<\sigma<1$. If $f$ is a finite linear combination of ultraspherical polynomials, then
\begin{equation}\label{eq:Lem01_principal}
  A_\sigma^{\lambda} f(x)
  =
  \int_{-1}^1\left(f(x)-f(y)\right)K_\sigma^\lambda(x,y)\,d\mu_\lambda(y)+E_{\sigma,\lambda}f(x), \qquad x\in (-1,1),
\end{equation}
where the kernel is given by
\[
  K_\sigma^\lambda(x,y)
  =
  D_{\sigma,\lambda}\int_{-1}^{1}\frac{d\mu_{\lambda-1/2}(t)}{(1-xy-\sqrt{1-x^2}\sqrt{1-y^2}t)^{\lambda+(1+\sigma)/2}},
\]
with
\[
  D_{\sigma,\lambda}=\frac{c_\lambda^2}{2^{\lambda+(1+\sigma)/2}}
  \frac{\Gamma(\frac{1-\sigma}{2})\Gamma(\lambda+\frac{1+\sigma}{2})}{|\Gamma(-\sigma)|\Gamma(1+\lambda)}, \qquad   c_\lambda=\frac{\Gamma(2\lambda+1)}{2^{2\lambda}(\Gamma(\lambda+1/2))^2},
\]
and
\[
  E_{\sigma,\lambda}=\frac{\Gamma(\lambda+\frac{1+\sigma}{2})}{\Gamma(\lambda+\frac{1-\sigma}{2})}.
\]

Moreover, for $f\in H_\lambda^\sigma$ we have
\begin{equation}
\label{eq:esc-prod}
\langle A_\sigma^\lambda f, f\rangle_\lambda=\frac{1}{2}\int_{-1}^1\int_{-1}^1 (f(x)-f(y))^2 K_\sigma^\lambda (x,y) \, d\mu_\lambda(y)\, d \mu_\lambda(x)+E_{\sigma,\lambda}\langle f, f \rangle_\lambda
\end{equation}
\end{Lem}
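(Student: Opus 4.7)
Since \eqref{eq:Lem01_principal} is linear in $f$, it suffices to verify it on the orthonormal basis $\{c_n^\lambda\}_{n\ge 0}$ and extend by linearity to the finite combination $f$. Given the spectral definition $A_\sigma^\lambda c_n^\lambda(x)=E_{\sigma,\lambda,n}\,c_n^\lambda(x)$ with $E_{\sigma,\lambda,n}:=\Gamma(n+\lambda+(1+\sigma)/2)/\Gamma(n+\lambda+(1-\sigma)/2)$ (so that $E_{\sigma,\lambda}=E_{\sigma,\lambda,0}$), the identity reduces to showing
\[
\int_{-1}^{1}\bigl(c_n^\lambda(x)-c_n^\lambda(y)\bigr)K_\sigma^\lambda(x,y)\,d\mu_\lambda(y)=(E_{\sigma,\lambda,n}-E_{\sigma,\lambda,0})\,c_n^\lambda(x),\qquad n\ge 0.
\]
The case $n=0$ is automatic ($c_0^\lambda$ is constant); for $n\ge 1$, the polynomial regularity of $c_n^\lambda$ combined with the cancellation $c_n^\lambda(x)-c_n^\lambda(y)$ tames the diagonal blow-up of $K_\sigma^\lambda$ (of borderline order $|x-y|^{-(1+\sigma)}$) and makes the integral absolutely convergent.

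The heart of the matter is the observation that $K_\sigma^\lambda$ is a constant multiple of the Gegenbauer translation of the singular function $g(z)=(1-z)^{-\lambda-(1+\sigma)/2}$. Setting
\[
\tau_x g(y):=\int_{-1}^{1}g\bigl(xy+\sqrt{1-x^2}\sqrt{1-y^2}\,t\bigr)\,d\mu_{\lambda-1/2}(t),
\]
we have $K_\sigma^\lambda(x,y)=D_{\sigma,\lambda}\,\tau_x g(y)$. The classical product (addition) formula for Gegenbauer polynomials,
\[
\tau_x C_n^\lambda(y)=\frac{\sqrt{\pi}\,\Gamma(\lambda)}{\Gamma(\lambda+1/2)}\,\frac{C_n^\lambda(x)C_n^\lambda(y)}{C_n^\lambda(1)},
\]
shows that $\tau_x$ diagonalises the basis $\{c_n^\lambda\}$. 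Using this (either via a limiting argument approximating $g$ by $g_r(z)=(1-rz)^{-\mu}$ with $r\uparrow 1$, or more directly by exploiting the $c_n^\lambda(x)-c_n^\lambda(y)$ cancellation) reduces the problem to evaluating the Jacobi-type moment $\int_{-1}^{1}(1-y)^{-\lambda-(1+\sigma)/2}c_n^\lambda(y)\,d\mu_\lambda(y)$; via the Rodrigues formula and $n$ integrations by parts this collapses to an explicit Gamma ratio, and the reflection and duplication identities for $\Gamma$ then verify $k_0-k_n=E_{\sigma,\lambda,n}-E_{\sigma,\lambda,0}$ precisely when $D_{\sigma,\lambda}$ is chosen as in the statement. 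The factor $|\Gamma(-\sigma)|$ originates from the boundary singularity at $y=1$, integrable exactly because $\sigma<1$.

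The ground-state identity \eqref{eq:esc-prod} then follows from \eqref{eq:Lem01_principal} by pairing with $f$ in $L^2_\lambda$, applying Fubini, and exploiting the manifest symmetry $K_\sigma^\lambda(x,y)=K_\sigma^\lambda(y,x)$: swapping $x\leftrightarrow y$ and averaging converts the double integral of $f(x)(f(x)-f(y))K_\sigma^\lambda$ into $\tfrac12\iint(f(x)-f(y))^2 K_\sigma^\lambda\,d\mu_\lambda(x)\,d\mu_\lambda(y)$. A density argument together with \eqref{eq:asym} extends the identity from finite ultraspherical combinations to arbitrary $f\in H_\lambda^\sigma$.

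The principal obstacle is the exact constant-chasing in the Gamma-function arithmetic needed to recover the precise $D_{\sigma,\lambda}$ of the statement; a subsidiary technical point is the rigorous handling of the non-integrability of $g$ in $L^2_\lambda$ and of the Fubini exchanges, where the polynomial regularity of $c_n^\lambda$ together with the cancellation $c_n^\lambda(x)-c_n^\lambda(y)$ provide the necessary structure.
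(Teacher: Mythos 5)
Your reduction to basis elements and your symmetrization/density argument for \eqref{eq:esc-prod} are fine, and the target identity
$\int_{-1}^1\bigl(c_n^\lambda(x)-c_n^\lambda(y)\bigr)K_\sigma^\lambda(x,y)\,d\mu_\lambda(y)=(E_{\sigma,\lambda,n}-E_{\sigma,\lambda,0})c_n^\lambda(x)$ is the right one. But the central computation in your plan has a genuine gap. The function $g(z)=(1-z)^{-\lambda-(1+\sigma)/2}$ is not merely ``not in $L^2_\lambda$'': it is not in $L^1_\lambda$, since near $z=1$ the integrand behaves like $(1-z)^{-\lambda-(1+\sigma)/2}(1-z)^{\lambda-1/2}=(1-z)^{-1-\sigma/2}$, which diverges for every $\sigma>0$. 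Hence the ``Jacobi-type moment'' $\int_{-1}^{1}(1-y)^{-\lambda-(1+\sigma)/2}c_n^\lambda(y)\,d\mu_\lambda(y)$ that your Rodrigues-plus-integration-by-parts step is supposed to evaluate simply does not exist (the boundary terms blow up and the final integral still carries the exponent $-1-\sigma/2$); your remark that the singularity at $y=1$ is ``integrable exactly because $\sigma<1$'' is incorrect — what $\sigma<1$ buys is integrability of the \emph{diagonal} singularity $|x-y|^{-1-\sigma}$ of $K_\sigma^\lambda$ after the cancellation $f(x)-f(y)$, not of $g$ at the endpoint. For the same reason you cannot first apply the product formula to ``diagonalise'' and then split the integral into $c_n^\lambda(x)\int K$ minus $\int c_n^\lambda(y)K$: each piece diverges. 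Your fallback regularization $g_r(z)=(1-rz)^{-\mu}$, $r\uparrow1$, does not dispose of this either, because each individual coefficient of $g_r$ diverges as $r\uparrow1$ (again $\mu=\lambda+(1+\sigma)/2>\lambda+1/2$); one must keep the difference of the $n$-th and $0$-th coefficients together, extract a common divergent part (after the $n$-dependent normalizations $C_n^\lambda(1)$, $d_n$ coming from the product formula), and prove the finite remainder equals $E_{\sigma,\lambda,n}-E_{\sigma,\lambda,0}$, e.g.\ via ${}_2F_1$ asymptotics near the unit argument or analytic continuation in the exponent. None of this is routine, and it is precisely where the constant $D_{\sigma,\lambda}$ is decided, so it cannot be relegated to a ``subsidiary technical point''.

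For contrast, the paper sidesteps the divergence at the source: it starts from the subordination identity
\[
\int_{0}^{\infty}\bigl(e^{-(n+\lambda)t}-e^{-(\sigma-1)t/2}\bigr)(\sinh t/2)^{-\sigma-1}\,dt
=2^{1+\sigma}\Gamma(-\sigma)\,\frac{\Gamma(n+\lambda+\frac{1+\sigma}{2})}{\Gamma(n+\lambda+\frac{1-\sigma}{2})},
\]
in which the subtracted exponential kills the $t^{-1-\sigma}$ singularity at $t=0$, then represents $e^{-t\sqrt{-\mathcal{L}_\lambda}}$ by the explicit Muckenhoupt--Stein Poisson kernel, splits off the term $f(x)e^{-t\sqrt{-\mathcal{L}_\lambda}}1(x)$ (which produces $E_{\sigma,\lambda}f$), and obtains $K_\sigma^\lambda$ and its constant by a single change of variables, with Fubini justified by an explicit pointwise bound on $P_t^\lambda$ and the mean value theorem. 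If you want to keep your spectral-verification route, you would essentially have to rebuild an analogue of that cancellation (for instance, prove the limit of the coefficient differences of $g_r$ by an Abel-type argument) before the Gamma-function arithmetic can even begin.
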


\begin{proof}
  We start with the identity
  \begin{equation}\label{eq:Lema0}
    \int_{0}^{\infty}\left(e^{-(n+\lambda)t}-e^{-(\sigma-1)t/2}\right)\left(\sinh t/2\right)^{-\sigma-1}\,dt
    =
    2^{1+\sigma}\Gamma(-\sigma)\frac{\Gamma(n+\lambda+\frac{1+\sigma}{2})}{\Gamma(n+\lambda+\frac{1-\sigma}{2})}
  \end{equation}
  for $\lambda>0$ (actually it is also true for values $\lambda>-1/2$) and $0<\sigma<1$. To deduce the previous identity it is enough to apply integration by parts with $u=e^{-(n+\lambda+(1-\sigma)/2)t}-1$ and $v=-2e^{-\sigma t/2}(\sinh t/2)^{-\sigma}/\sigma$, and use \cite[eq. 8, p. 367]{PrudnikovI}
  $$
    \int_{0}^{\infty}e^{-\rho t}\left(\cosh (ct)-1\right)^{\nu}\,dt=\frac{\Gamma(\frac{\rho}{c}-\nu)\Gamma(2\nu+1)}{2^\nu c\Gamma(\frac{\rho}{c}+\nu+1)}
  $$
  for $c>0$, $2\nu>-1$, and $\rho>c\nu$.

  Now, we consider the Poisson operator for ultraspherical expansions. It is given by
  \[
  e^{-t\sqrt{-\mathcal{L}_\lambda}}f(x)=\sum_{n=0}^{\infty}e^{-(n+\lambda)t}a_n^\lambda(f)c_n^\lambda(x)=\int_{-1}^1 f(y)P_t^\lambda(x,y)\, d\mu_\lambda(y),
  \]
  with
  \[
  P_t^\lambda(x,y)=\sum_{n=0}^\infty e^{-(n+\lambda)t}c_n^\lambda(x)c_n^\lambda(y).
  \]
  By the product formula for ultraspherical polynomials \cite[eq. B.2.9, p. 419]{DaiXu}
  \[
    \frac{C_n^\lambda(x)C_n^\lambda(y)}{C_n^\lambda(1)}=c_\lambda\int_{-1}^{1}
    C_n^\lambda(xy+\sqrt{1-x^2}\sqrt{1-y^2}t)\,d\mu_{\lambda-1/2}(t), \qquad \lambda>0,
  \]
 the identity \cite[eq. B.2.8. p. 419]{DaiXu}
  \[
    \sum_{n=0}^{\infty}\frac{n+\lambda}{\lambda}C_n^\lambda(x)r^n=\frac{1-r^2}{(1-2xr+r^2)^{\lambda+1}}, \qquad 0\leq r<1,
  \]
  and the relation $d_n^2=\frac{\lambda}{c_\lambda(n+\lambda)} C_n^\lambda(1)$, we deduce the expression
  \[
  P_t^\lambda(x,y)=\frac{c_\lambda^2}{2^\lambda} \int_{-1}^{1}\frac{\sinh t}{(\cosh t -w(s))^{\lambda+1}}\,d\mu_{\lambda-1/2}(s),
  \]
  with $w(s)=xy+\sqrt{1-x^2}\sqrt{1-y^2}s$. The previous identity for $P_t^\lambda$ is not new, it appears as formula (2.12) in \cite{MS}.

  Combining \eqref{eq:Lema0} and the definition of the Poisson operator, it is clear that
  \[
    A_\sigma^{\lambda} f(x)
    =
    \frac{1}{2^{1+\sigma}\Gamma(-\sigma)}\int_{0}^{\infty}
    \left(e^{-t\sqrt{-\mathcal{L}_{\lambda}}}f(x)-f(x)e^{-(\sigma-1)t/2}\right)\left(\sinh t/2\right)^{-\sigma-1}\,dt,
  \]
  which can be splitted in
  \begin{multline}\label{eq:IntSplitLem1}
  %\begin{split}
    A_\sigma^{\lambda} f(x)
    \\=
    \frac{1}{2^{1+\sigma}\Gamma(-\sigma)}\int_{0}^{\infty}
    \left(e^{-t\sqrt{-\mathcal{L}_\lambda}}f(x)-f(x)e^{-t\sqrt{-\mathcal{L}_\lambda}}1(x)\right)\left(\sinh t/2\right)^{-\sigma-1}\,dt
      \\
      +\frac{f(x)}{2^{1+\sigma}\Gamma(-\sigma)}\int_{0}^{\infty}
      \left(e^{-t\sqrt{-\mathcal{L}_\lambda}}1(x)-e^{-(\sigma-1)t/2}\right)\left(\sinh t/2\right)^{-\sigma-1}\,dt.
  %\end{split}
  \end{multline}
  From the obvious identity
  \[
    e^{-t\sqrt{-\mathcal{L}_\lambda}}1(x)=\int_{-1}^{1}P_t^\lambda(x,y)\,d\mu_\lambda(y)=e^{-\lambda t},
  \]
  for the second term in \eqref{eq:IntSplitLem1} we have
  \begin{equation*}
  \begin{split}
    \frac{f(x)}{2^{1+\sigma}\Gamma(-\sigma)}\int_{0}^{\infty}&
      \left(e^{-t\sqrt{-\mathcal{L}_\lambda}}1(x)-e^{-(\sigma-1)t/2}\right)\left(\sinh t/2\right)^{-\sigma-1}\,dt
    \\ &=
      \frac{f(x)}{2^{1+\sigma}\Gamma(-\sigma)}\int_{0}^{\infty}\left(e^{-\lambda t}-e^{-(\sigma-1)t/2}\right)\left(\sinh t/2\right)^{-\sigma-1}\,dt
    \\ &=
      E_{\sigma,\lambda}f(x),
  \end{split}
  \end{equation*}
  where we have used \eqref{eq:Lema0} with $n=0$.

  The first integral in \eqref{eq:IntSplitLem1} verifies
  \begin{multline*}
    \frac{1}{2^{1+\sigma}\Gamma(-\sigma)}\int_{0}^{\infty}
    \left(e^{-t\sqrt{-\mathcal{L}_\lambda}}f(x)-f(x)e^{-t\sqrt{-\mathcal{L}_\lambda}}1(x)\right)\left(\sinh t/2\right)^{-\sigma-1}\,dt\\
    \begin{aligned}
    &= \frac{1}{2^{1+\sigma}|\Gamma(-\sigma)|}\int_{0}^{\infty}\int_{-1}^{1} P_t^\lambda(x,y)(f(x)-f(y))\, d\mu_\lambda(y)\left(\sinh t/2\right)^{-\sigma-1}\,dt
    \\& =
    \frac{1}{2^{1+\sigma}|\Gamma(-\sigma)|}\int_{-1}^{1}\left(f(x)-f(y)\right)
    \int_{0}^{\infty}P_t^\lambda(x,y)\left(\sinh t/2\right)^{-\sigma-1}\,dt\,d\mu_\lambda(y)
    \\& =
   \int_{-1}^{1}\left(f(x)-f(y)\right)K_\sigma^\lambda(x,y)\,d\mu_\lambda(y),
    \end{aligned}
  \end{multline*}
  with
  \[
   K_\sigma^\lambda(x,y)= \frac{1}{2^{1+\sigma}|\Gamma(-\sigma)|}\int_{0}^{\infty}P_t^\lambda (x,y)\left(\sinh t/2\right)^{-\sigma-1}\,dt.
  \]
  In last computation we have used Fubini theorem. This is justified for finite combinations of ultraspherical polynomials by using the estimate
  \begin{equation*}
  %\label{eq:boundP}
  P_t^\lambda(x,y)\le \frac{C \sinh t}{(1-x^2)^{\lambda/2}(1-y^2)^{\lambda/2}(\cosh t-xy-\sqrt{1-x^2}\sqrt{1-y^2})},
  \end{equation*}
  which follows from the elementary inequality
  \[
  \int_{-1}^1 \frac{(1-s^2)^{\lambda-1}}{(A-Bs)^{\lambda+1}}\, ds\le  \frac{C}{B^\lambda(A-B)}, \qquad A>B>0, \quad \lambda>0,
  \]
  and the mean value theorem.
  Indeed, taking $C_f=\max\{|f'(x)|:x\in [-1,1]\}$ and using the inequality $1-xy-\sqrt{1-x^2}\sqrt{1-y^2}\ge C|x-y|^2$, we have
  \begin{multline*}
  \int_{0}^{\infty}\int_{-1}^{1} P_t^\lambda(x,y)|f(x)-f(y)|\, d\mu_\lambda(y)\left(\sinh t/2\right)^{-\sigma-1}\,dt\\\le \frac{C_f}{(1-x^2)^{\lambda/2}} \left(C_1\int_0^1\int_{-1}^1 \frac{t^{-\sigma}|x-y|}{t^2+|x-y|^2}(1-y^2)^{\lambda/2-1/2}\, dy \, dt\right.\\\left.+ C_2\int_1^\infty\int_{-1}^1 e^{-(\sigma+1)t/2}|x-y|(1-y^2)^{\lambda/2-1/2}\, dy \, dt\right)=: \frac{C_f}{(1-x^2)^{\lambda/2}} ( I_1+I_2).
  \end{multline*}
  Obviously, $I_2$ is a finite integral. For $I_1$ the change of variable $t=|x-y|s$ gives
  \[
  I_1\le C_1 \int_0^\infty\frac{s^{-\sigma}}{s^2+1}\, ds\int_{-1}^1 |x-y|^{-\sigma}(1-y^2)^{\lambda/2-1/2}\, dy<\infty.
  \]

  To obtain the expression of $K_\sigma^\lambda$ we observe that
  \begin{multline*}
    K_\sigma^\lambda(x,y)\\
    \begin{aligned}
    &=\frac{c_\lambda^2}{2^{\lambda+1+\sigma}|\Gamma(-\sigma)|}\int_{0}^{\infty}\int_{-1}^{1}\frac{\sinh t}{(\cosh t -w(s))^{\lambda+1}}\,d\mu_{\lambda-1/2}(s)\left(\sinh t/2\right)^{-\sigma-1}\,dt
    \\ &=
    \frac{c_\lambda^2}{2^{\lambda+(1+\sigma)/2}}
    \frac{\Gamma(\frac{1-\sigma}{2})\Gamma(\lambda+\frac{1+\sigma}{2})}{|\Gamma(-\sigma)|\Gamma(\lambda+1)}
      \int_{-1}^{1}\frac{d\mu_{\lambda-1/2}(s)}{(1-w(s))^{\lambda+(1+\sigma)/2}},
    \end{aligned}
  \end{multline*}
  where we have applied Fubini theorem and the change of variable $2(\sinh t/2)^2=z(1-w(s))$ in last equality. With the last identity we have concluded the proof of \eqref{eq:Lem01_principal}.

To prove \eqref{eq:esc-prod} we follow the argument in \cite[Lemma 5.1]{RT}. First, we observe that the kernel $K_\sigma^\lambda(x,y)$ is positive and symmetric in the sense that $K_\sigma^\lambda(x,y)=K_\sigma^\lambda(y,x)$. Then, \eqref{eq:esc-prod} is clear when $f$ is a finite linear combination of ultraspherical polynomials. For $f\in H_\lambda^\sigma$ we consider a sequence of finite linear combinations of ultraspherical polynomials $\{p_k\}_{k\ge 0}$ such that $p_k$ converges to $f$ in $H_\lambda^\sigma$. Then, by using the definition of $A_\sigma^\lambda$, it is clear that $\langle A_\sigma^\lambda p_k, p_k\rangle_{\lambda}$ converges to $\langle A_\sigma^\lambda f, f\rangle_{\lambda}$. Moreover, the result for polynomial functions implies
\begin{multline}
\label{eq:esc-prod-pk}
\langle A_\sigma^\lambda p_k, p_k\rangle_{\lambda}=\frac{1}{2}\int_{-1}^1\int_{-1}^1 (p_k(x)-p_k(y))^2 K_\sigma^\lambda (x,y) \, d\mu_\lambda(y)\, d \mu_\lambda(x)\\+E_{\sigma,\lambda}\langle p_k, p_k \rangle_\lambda<\infty.
\end{multline}
Consequently, the functions $P_k(x,y)=p_k(x)-p_k(y)$ form a Cauchy sequence in $L^2((-1,1)\times (-1,1), d\omega)$ where $d\omega(x,y)=K_\sigma^\lambda(x,y)\, d\mu_\lambda(x)\, d\mu_\lambda(y)$ which converges to $f(x)-f(y)$ in this norm. Hence, passing to the limit in \eqref{eq:esc-prod-pk}, we complete the proof of the lemma.
\end{proof}

\begin{Lem}\label{lemma02}
Let $\lambda>0$ and $2\lambda+1>\sigma>0$. Then
\begin{equation}\label{eq:Lem02_principal}
A_\sigma^{\lambda}\left(\frac{1}{(1-x^2)^{\lambda/2+(1-\sigma)/4}}\right)=
\frac{Q_{\sigma,\lambda}}{(1-x^2)^{\lambda/2+(1+\sigma)/4}},
\end{equation}
where $Q_{\sigma,\lambda}$ is the constant given in \eqref{ec:cons-Hardy}.
\end{Lem}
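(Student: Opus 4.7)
The strategy is spectral. Since $f_\alpha(x):=(1-x^2)^{-\alpha}$ is even, its ultraspherical expansion involves only the polynomials $c_{2k}^\lambda$, and it suffices to compute the Fourier coefficients $a_{2k}^\lambda(f_\alpha)$ for $\alpha=\lambda/2+(1-\sigma)/4$ and for $\alpha+\sigma/2=\lambda/2+(1+\sigma)/4$, and then to apply $A_\sigma^{\lambda}$ termwise. The hypothesis $\sigma<2\lambda+1$ is precisely what ensures that both of the defining integrals converge absolutely.

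To compute these coefficients I would combine the quadratic transformation expressing even Gegenbauer polynomials as Jacobi polynomials of type $(\lambda-1/2,-1/2)$ via the change of variable $u=2y^2-1$ with Rodrigues' formula for $P_k^{(\lambda-1/2,-1/2)}$ and $k$ integrations by parts. Concretely, the Jacobi moment
\[
\int_{-1}^{1}P_k^{(\lambda-1/2,-1/2)}(u)(1-u)^{\lambda-1/2-\alpha}(1+u)^{-1/2}\,du
\]
collapses, after these steps, to $(\alpha)_k$ times a single Beta integral $\int_{-1}^{1}(1-u)^{\lambda-1/2-\alpha}(1+u)^{k-1/2}\,du$, which is explicit. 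This yields a closed form
\[
a_{2k}^\lambda(f_\alpha)=\gamma_k\cdot\frac{(\alpha)_k\,\Gamma(\lambda+\tfrac12-\alpha)\,\Gamma(k+\tfrac12)}{2\,k!\,\Gamma(\lambda+1-\alpha+k)},
\]
with $\gamma_k$ an $\alpha$-independent factor arising from the normalization $c_{2k}^\lambda=d_{2k}^{-1}C_{2k}^\lambda$ and the Gegenbauer-to-Jacobi constant. Applying $A_\sigma^{\lambda}$ termwise, the identity \eqref{eq:Lem02_principal} reduces to the coefficient-wise equality
\[
\frac{\Gamma(2k+\lambda+\frac{1+\sigma}{2})}{\Gamma(2k+\lambda+\frac{1-\sigma}{2})}\,a_{2k}^\lambda(f_\alpha)=Q_{\sigma,\lambda}\,a_{2k}^\lambda(f_{\alpha+\sigma/2})\quad\text{for every }k\ge 0.
\]

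The Legendre duplication formula splits $\Gamma(2k+\lambda+\frac{1\pm\sigma}{2})$ as a product of $\Gamma(k+\lambda/2+\frac{1\pm\sigma}{4})\Gamma(k+\lambda/2+\frac{3\pm\sigma}{4})$ with elementary prefactors, and the crucial algebraic identity $\lambda+\frac12-\alpha=\alpha+\frac{\sigma}{2}$ (equivalently $\lambda+1-\alpha+k=k+\alpha+\frac{\sigma}{2}+\frac12$) forces every $k$-dependent Gamma factor to cancel; what remains is exactly $2^\sigma\Gamma(\alpha+\sigma/2)^2/\Gamma(\alpha)^2=Q_{\sigma,\lambda}$. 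The main technical obstacle is the bookkeeping of the many powers of $2$ (one batch from $u=2y^2-1$, another from the duplication formula) and Gamma-function shifts, any one of which is easy to misplace. A secondary point is that $f_{\alpha+\sigma/2}\notin L^2_\lambda$ in general under the hypothesis $0<\sigma<2\lambda+1$, so \eqref{eq:Lem02_principal} has to be interpreted pointwise; the cleanest way is to read the left-hand side through the integral representation of Lemma~\ref{lemma01} applied to $f_\alpha$ (checking that both terms there converge absolutely for this choice of $f$), and then transfer the coefficient identity to the pointwise statement on $(-1,1)$.
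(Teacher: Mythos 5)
Your proposal is correct, and the high-level strategy coincides with the paper's: both reduce \eqref{eq:Lem02_principal} to the coefficientwise identity
\[
\frac{\Gamma\bigl(2m+\lambda+\tfrac{1+\sigma}{2}\bigr)}{\Gamma\bigl(2m+\lambda+\tfrac{1-\sigma}{2}\bigr)}\,a_{2m}^\lambda\bigl((1-x^2)^{-\alpha}\bigr)=Q_{\sigma,\lambda}\,a_{2m}^\lambda\bigl((1-x^2)^{-\alpha-\sigma/2}\bigr),\qquad \alpha=\tfrac{\lambda}{2}+\tfrac{1-\sigma}{4},
\]
using that odd coefficients vanish. Where you genuinely differ is in how the even moments are computed. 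The paper quotes a tabulated integral of Prudnikov expressing $\int_{-1}^1(1-x^2)^{\beta-1}C_{2m}^\lambda(x)\,dx$ as a ${}_3F_2$, evaluates it by Watson's summation theorem, and then needs both the reflection and duplication formulas to recognize the ratio of the two moments as $Q_{\sigma,\lambda}^{-1}\Gamma(2m+2\alpha+\sigma)/\Gamma(2m+2\alpha)$ (this is \eqref{eq:Lem02_key} and the subsequent manipulation of $R_{\sigma,\lambda}$). Your route is more elementary and self-contained: the quadratic transformation $C_{2m}^\lambda(y)=\frac{(\lambda)_m}{(1/2)_m}P_m^{(\lambda-1/2,-1/2)}(2y^2-1)$, Rodrigues' formula and $m$ integrations by parts (the boundary terms vanish precisely because $\alpha,\alpha+\sigma/2<\lambda+\tfrac12$, i.e.\ $\sigma<2\lambda+1$) reduce everything to one Beta integral, giving the closed form you state; I checked that the $\alpha$-dependent powers of $2$ coming from $u=2y^2-1$ and from the Beta integral cancel, so your $\gamma_m$ is indeed $\alpha$-independent. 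With the key symmetry $\lambda+\tfrac12-\alpha=\alpha+\tfrac{\sigma}{2}$ the two coefficients are exchanged under $\alpha\leftrightarrow\alpha+\sigma/2$ up to the factor handled by the duplication formula, and the constant that survives is exactly $2^\sigma\Gamma(\alpha+\sigma/2)^2/\Gamma(\alpha)^2=Q_{\sigma,\lambda}$ from \eqref{ec:cons-Hardy}; so only duplication is needed, no hypergeometric tables and no reflection formula. Your closing remark is also a fair observation rather than a gap: since $(1-x^2)^{-\alpha-\sigma/2}\notin L^2_\lambda$ while its coefficients against $c_n^\lambda$ converge for $\sigma<2\lambda+1$, the identity should be read coefficientwise (equivalently through the representation of Lemma~\ref{lemma01}, or weakly, which is how it is applied in the proof of Theorem~\ref{theorem01}); the paper's own proof implicitly does the same, as it only ever establishes the equality of Fourier coefficients.
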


\begin{proof}
  First of all, we have to realize that the ultraspherical polynomial $C_{n}^\lambda(x)$ is odd for $n=2m+1$, $m\in\mathbb{Z}^{+}$; therefore, for $\beta>0$, the function $(1-x^2)^{\beta-1} C_{2m+1}^\lambda(x)$ is an odd function and its integral over the interval $(-1,1)$ is zero. For $n=2m$ we use \cite[eq. 15, p. 519]{PrudnikovII} to obtain
  \begin{equation*}
  \begin{split}
    \int_{-1}^{1}&(1-x^2)^{\beta-1}C_{2m}^\lambda(x)\,dx
    \\ &=
    \sqrt{\pi}\frac{(2\lambda)_{2m}}{(2m)!}\frac{\Gamma(\beta)}{\Gamma(\beta+1/2)}
    {}_3F_2(-2m,2\lambda+2m,\beta;2\beta,\lambda+1/2;1)
    \\ &=
    \pi\frac{(2\lambda)_{2m}}{(2m)!}
    \frac{\Gamma(\beta)\Gamma(\lambda+1/2)\Gamma(\beta-\lambda+1/2)}
    {\Gamma(1/2-m)\Gamma(\lambda+m+1/2)\Gamma(\beta+m+1/2)\Gamma(\beta-\lambda-m+1/2)},
  \end{split}
  \end{equation*}
  where in last identity we have evaluated the hypergeometric function with the so-called Watson formula \cite[eq. 16.4.6, p. 406]{Olver}. Therefore, if we denote $\alpha=\lambda/2+(1-\sigma)/4$, we obtain that
  \begin{equation}\label{eq:Lem02_key}
  \int_{-1}^{1}(1-x^2)^{\alpha-1}C_{2m}^\lambda(x)\,dx=
  R_{\sigma,\lambda}\int_{-1}^{1}(1-x^2)^{\alpha+\sigma/2-1}C_{2m}^{\lambda}(x)\,dx,
  \end{equation}
  with
  \begin{multline*}
  R_{\sigma,\lambda}=
  \frac{\Gamma(\alpha)\Gamma(\alpha-\lambda+1/2)}{\Gamma(\alpha+\sigma/2)\Gamma(\alpha-\lambda+1/2+\sigma/2)}\\\times
  \frac{\Gamma(\alpha+m+1/2+\sigma/2)\Gamma(\alpha-\lambda-m+1/2+\sigma/2)}{
    \Gamma(\alpha+m+1/2)\Gamma(\alpha-\lambda-m+1/2)}.
  \end{multline*}
  In this way, if we prove the identity
  \begin{equation}\label{eq:Lem02_relation}
  R_{\sigma,\lambda}=Q_{\sigma,\lambda}^{-1}\frac{\Gamma(2m+2\alpha+\sigma)}{\Gamma(2m+2\alpha)}
  \end{equation}
  we will conclude the proof, because \eqref{eq:Lem02_key} implies
  \[
  a_n^\lambda\left(\frac{1}{(1-x^2)^{\alpha+\sigma/2}}\right)
  =Q_{\sigma,\lambda}^{-1}\frac{\Gamma(n+2\alpha+\sigma)}{\Gamma(n+2\alpha)}
  a_n^\lambda\left(\frac{1}{(1-x^2)^\alpha}\right),
  \]
  where we have had in mind that the $n$-th Fourier coefficient is null when $n=2m+1$.

  Let us check that \eqref{eq:Lem02_relation} actually holds. Using the reflection formula \cite[eq. 6.1.17, p. 256]{AS} twice we have
  \begin{align*}
  \frac{\Gamma(\alpha-\lambda-m+1/2+\sigma/2)}{\Gamma(\alpha-\lambda-m+1/2)}
  &=
  \frac{\Gamma(\alpha+m+\sigma/2)}{\Gamma(\alpha+m)}\frac{\sin(\pi(\alpha-\lambda-m+1/2))}
  {\sin(\pi(\alpha-\lambda-m+1/2+\sigma/2))} \\
  &=
  \frac{\Gamma(\alpha+m+\sigma/2)}{\Gamma(\alpha+m)}
  \frac{\Gamma(\alpha)\Gamma(\alpha-\lambda+1/2+\sigma/2)}{\Gamma(\alpha+\sigma/2)\Gamma(\alpha-\lambda+1/2)},
  \end{align*}
  and then
  \begin{align*}
  R_{\sigma,\lambda}
  &=
  \frac{\Gamma(\alpha)^2}{\Gamma(\alpha+\sigma/2)^2}
  \frac{\Gamma(\alpha+m+\sigma/2)\Gamma(\alpha+m+\sigma/2+1/2)}{\Gamma(\alpha+m)\Gamma(\alpha+m+1/2)}
  \\ &=
  Q_{\sigma,\lambda}^{-1}\frac{\Gamma(2m+2\alpha+\sigma)}{\Gamma(2m+2\alpha)},
  \end{align*}
  by the duplication formula \cite[eq. 6.1.18, p. 256]{AS}.
\end{proof}

%%%%%%%%%%%%%%%%%%%%%%%%%%%%%%%%%%%%%%%%%%%%%%%%%%%%%
\section{Proof of Theorem \ref{theorem01}}
%%%%%%%%%%%%%%%%%%%%%%%%%%%%%%%%%%%%%%%%%%%%%%%%%%%%%%

Polarizing the identity \eqref{eq:esc-prod} in Lemma \ref{lemma01} we obtain
\begin{equation}\label{eq:Thm01_key}
  \langle g,A_\sigma^{\lambda} f\rangle_\lambda=\frac{1}{2}\int_{-1}^{1}\int_{-1}^{1}F(x,y)
  K_\sigma^\lambda(x,y)\,d\mu_\lambda(y)\,d\mu_\lambda(x)+E_{\sigma,\lambda}\langle g,f\rangle_\lambda,
\end{equation}
with $F(x,y)=(g(x)-g(y))(f(x)-f(y))$.

Let us take $g(x)=(1-x^2)^{-\lambda/2-(1-\sigma)/4}$ and $f(x)=u^2(x)/g(x)$ for $u\in H_\lambda^\sigma$. Then
$$
  F(x,y)=\left(u(x)-u(y)\right)^2-g(x)g(y)\left(\frac{u(x)}{g(x)}-\frac{u(y)}{g(y)}\right)^2
$$
and \eqref{eq:Thm01_key} becomes
\begin{multline*}
  \langle g,A_\sigma^{\lambda} f\rangle_\lambda\\
  =
  \langle u,A_\sigma^{\lambda} u\rangle_\lambda-\frac{1}{2}\int_{-1}^{1}\int_{-1}^{1}g(x)g(y)\left(\frac{u(x)}{g(x)}-\frac{u(y)}{g(y)}\right)^2 K_\sigma^\lambda(x,y)\,d\mu_\lambda(y)\,d\mu_\lambda(x).
\end{multline*}
Now, by \eqref{eq:Lem02_principal}, we have
\[
  \langle g,A_\sigma^{\lambda} f\rangle_\lambda
  =
  \langle A_\sigma^{\lambda} g,f\rangle_\lambda
  =
  Q_{\sigma,\lambda}\int_{-1}^{1}\frac{u^2(x)}{(1-x^2)^{\sigma/2}}\,d\mu_\lambda(x)
\]
and then we can deduce the ground state representation
\begin{multline}
\label{ec:ground}
\langle u,A_\sigma^{\lambda} u\rangle_\lambda-Q_{\sigma,\lambda}\int_{-1}^{1}\frac{u^2(x)}{(1-x^2)^{\sigma/2}}\,d\mu_\lambda(x)
\\=\frac{1}{2}\int_{-1}^{1}\int_{-1}^{1}g(x)g(y)\left(\frac{u(x)}{g(x)}-\frac{u(y)}{g(y)}\right)^2 K_\sigma^\lambda(x,y)\,d\mu_\lambda(y)\,d\mu_\lambda(x).
\end{multline}
So, due to the positivity of the kernel $K_{\sigma}^\lambda$, we conclude the proof.

\end{document}